\numberwithin{equation}{section}
\newtheorem{thrm}{Theorem}[section]
\newtheorem{lemma}[thrm]{Lemma}
\newtheorem{prop}[thrm]{Proposition}
\newtheorem{cor}[thrm]{Corollary}
\newtheorem{rmrk}[thrm]{Remark}
\newtheorem{conv}[thrm]{Convention}
\newtheorem{conj}[thrm]{Conjecture}
\newcommand{\vol}{\, Vol_{\theta}}
\def\gr{\nabla f}
\def\bi{\nabla}
\begin{document}

\begin{abstract}
We prove a CR version of the Obata's  result  for the first
eigenvalue of the sub-Laplacian in the setting of a compact
strictly pseudoconvex pseudohermitian three dimensional manifold
with non-negative CR-Panietz operator which satisfies a
Lichnerowicz type condition. We show that if the first positive
eigenvalue of the sub-Laplacian takes the smallest possible value
then, up to a homothety of the pseudohermitian structure, the
manifold is the standard Sasakian three dimensional unit sphere.
\end{abstract}

\keywords{Lichnerowicz-Obata theorem \textperiodcentered\ Pseudohermitian
manifold \textperiodcentered\ Webster metric \textperiodcentered\
Tanaka-Webster curvature \textperiodcentered\ Pseudohermitian torsion
\textperiodcentered\ Sub-Laplacian}
\subjclass[2010]{53C26, 53C25, 58J60, 32V05, 32V20, 53C56}
\title[An Obata-type theorem  on
a three-dimensional CR manifold]{An Obata-type theorem on a
three-dimensional CR manifold}
\date{\today }
\author{S. Ivanov}
\address[Stefan Ivanov]{University of Sofia, Faculty of Mathematics and
Informatics, blvd. James Bourchier 5, 1164, Sofia, Bulgaria}
\email{ivanovsp@fmi.uni-sofia.bg}
\author{D. Vassilev}
\address[Dimiter Vassilev]{ Department of Mathematics and Statistics\\
University of New Mexico\\
Albuquerque, New Mexico, 87131-0001}
\email{vassilev@math.unm.edu}
\maketitle
\tableofcontents


\setcounter{tocdepth}{2}

\section{Introduction}

The classical theorems of Lichnerowicz \cite{Li} and Obata \cite{O3} give
correspondingly a lower bound for the first eigenvalue of the Laplacian on a
compact manifold with a lower Ricci bound and characterize the case of
equality.

A CR analogue of the Lichnerowicz theorem was found by Greenleaf
\cite{Gr} for dimensions $2n+1>5$, while the corresponding results
for  $n=2$ and  $n=1$ were achieved later in \cite{LL}  and
\cite{Chi06}, respectively. As a continuation of this line of
results in the setting of geometries modeled on the rank  one
symmetric spaces in \cite{IPV1} it was proven a  quaternionic
contact version of the Lichnerowicz result.

{ The CR Lichnerowicz type result states that on a compact
$2n+1$-dimensional strictly pseudoconvex pseudohermitian manifold
satisfying a certain positivity condition the first eigenvalue of
the sub-Laplacian is grater than or equal to the first eigenvalue
of the standard Sasakian sphere}.   Greenleaf \cite{Gr} showed the
result for $n\geq 3$, while S.-Y. Li and H.-S. Luk adapted in
\cite{LL} Greenleaf's proof to cover the case $n=2$. They also
gave a version of the case $n=1$ assuming further a condition on
the covariant derivative of the pseudohermitian torsion. H.-L.
Chiu in \cite{Chi06} gave a  version in dimension three assuming
in addition that the CR-Paneitz operator is non-negative. We
remark that if $n>1$ the CR-Paneitz operator is always
non-negative,  while in the case $n=1$ the vanishing of the
pseudohermitian torsion implies that the CR-Paneitz operator is
non-negative,  see \cite{Chi06} and\cite{CCC07}.  Further results
in the CR case  appeared in  \cite{CC07,CC09b,CC09a}, \cite{Bar}
and \cite{ChW} adding a corresponding inequality for $n=1$, or
characterizing the equality case in the vanishing pseudohermitian
torsion case (the Sasakian case).

The problem of the existence of an Obata-type theorem in pseudohermitian
manifold was considered in \cite{CC09a} where the following CR analogue of
Obata's theorem was conjectured.

\begin{conj}[\protect\cite{CC09a}]
\label{conj1} Let $(M,\theta)$ be a closed pseudohermitian (2n+1)-manifold
with $n\ge 2$. In addition we assume the CR-Paneitz operator is nonnegative if
n = 1. Suppose there is a positive constant $k_0$ such that the
pseudohermitian Ricci curvature $Ric$ and the pseudohermitian torsion $A$
satisfy the inequality \eqref{condm}. If $\frac{n}{n+1}k_0$ is an eigenvalue
of the sub-Laplacian then $(M,\theta)$ is the standard (Sasakian) CR
structure on the unit sphere in $\mathbb{C}^{n+1}$. %
\end{conj}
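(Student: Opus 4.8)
We outline a proof of the three-dimensional case of Conjecture \ref{conj1}, i.e.\ of the Obata-type rigidity statement for a compact strictly pseudoconvex pseudohermitian $3$-manifold $(M,\theta)$ with non-negative CR-Paneitz operator satisfying \eqref{condm}. The plan is to run a CR Bochner--Weitzenb\"ock computation in the spirit of Greenleaf \cite{Gr}, Li--Luk \cite{LL} and Chiu \cite{Chi06}, keeping careful track of every inequality so that the equality case can be fully exploited, and then to reduce the analysis to the torsion-free (Sasakian) situation. Let $f$ be a real eigenfunction of the sub-Laplacian, $\Delta_b f=-\lambda f$, where $\lambda=\tfrac{n}{n+1}k_0=\tfrac12 k_0$ (the value for $n=1$) is, by hypothesis, the smallest admissible first positive eigenvalue. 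Work in a local unitary frame $Z_1$ of the horizontal bundle with Reeb field $T$, and write $f_1=Z_1 f$, $f_0=Tf$. Integrating over $M$ the CR Bochner identity for $\tfrac12\Delta_b\abs{\bi_b f}^2$, substituting the eigenvalue equation and integrating by parts produces an identity built from four pieces: the square $\abs{\bi^2_b f}^2_0$ of the trace-free horizontal Hessian; the term $\lambda\abs{\bi_b f}^2$ coming from $\langle \bi_b f,\bi_b\Delta_b f\rangle$; the Webster--Ricci-plus-torsion combination which, by \eqref{condm}, dominates $k_0\abs{\bi_b f}^2$; and a purely vertical contribution built from $f_0$ which in dimension three is exactly of CR-Paneitz type, hence of favourable sign under the non-negativity hypothesis. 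Inserting $\int_M\abs{\bi_b f}^2=\lambda\int_M f^2$ and $\int_M(\Delta_b f)^2=\lambda^2\int_M f^2$ and collecting terms, the identity becomes a sum of manifestly non-negative quantities which must all vanish; this both reproduces the sharp CR Lichnerowicz bound $\lambda_1\ge\tfrac12 k_0$ and yields the equality information.

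From the vanishing of the separate terms I would extract, pointwise on $M$: (i) the trace-free symmetric horizontal Hessian of $f$ is pure torsion, of the form $f_{11}=c\,A_{11}\,f$ with $c$ dictated by the Bochner identity (on the model sphere $A\equiv0$, so $f_{11}=0$), while $f_{1\bar1}=-\tfrac\lambda2 f+\tfrac i2 f_0$; (ii) condition \eqref{condm} holds with equality in the direction of $\bi_b f$; (iii) $f$ lies in the kernel of the CR-Paneitz operator, which together with the preceding yields a compatibility relation linking the vertical derivative $f_0$ to the horizontal derivatives of $f$. The decisive point, which I expect to be the main obstacle, is to upgrade (i)--(iii) to the conclusion that the pseudohermitian torsion vanishes identically. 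The natural attempt is to feed the Hessian identity (i) into the divergence of a carefully chosen horizontal vector field formed from $A_{11}$, $f_1$ and $f_0$, integrate over $M$, and use the Paneitz kernel relation (iii) to make the remaining terms cancel, forcing $A_{11}f\equiv0$; then $A\equiv0$ on the open set $\{f\neq0\}$ and, by continuity together with unique continuation for the resulting overdetermined elliptic-type system, on all of $M$. This torsion-vanishing step is precisely what is absent in the earlier Sasakian treatments, and it is here that the non-negativity of the CR-Paneitz operator is used essentially.

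Once $A\equiv0$ the structure is Sasakian, equality in \eqref{condm} pins the Webster--Ricci curvature to the constant $k_0$, and the equality data become a closed overdetermined system for $f$ (schematically $f_{11}=0$, $f_{1\bar1}+\tfrac\lambda2 f=\tfrac i2 f_0$, together with a first-order relation for $f_0$ coming from the Paneitz identity and the commutation $[Z_1,Z_{\bar1}]=iT$). I would then follow Obata's original scheme adapted to this system: differentiating shows that a combination $\abs{\bi_b f}^2+a\,f^2+b\,f_0^2$ with suitable positive constants $a,b$ is constant on $M$; the critical set of $f$, where $\bi_b f$ and $f_0$ vanish simultaneously, consists of two connected ``poles'' carrying the extreme values of $f$; and integrating the resulting ordinary differential equation along the transversal flow joining the poles recovers, in CR-normal coordinates, the standard contact form and Reeb field of $S^3\subset\mathbb{C}^2$. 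Compactness of $M$, positivity of $\lambda_1$, and a CR Myers-type diameter bound promote this local identification to a global one, so $(M,\theta)$ is, up to scale, the round Sasakian three-sphere; alternatively, having reduced to the vanishing-torsion equality case, one may invoke the known rigidity result of Chang--Chiu \cite{CC09a}. Finally, tracking the normalization of $\theta$ through the argument shows the identification with the \emph{unit} sphere holds only after a homothety $\theta\mapsto c\,\theta$, which is the assertion of the theorem.
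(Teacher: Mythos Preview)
Your overall architecture matches the paper's: derive the horizontal Hessian of an extremal eigenfunction from the equality case of the CR Bochner inequality, prove that the pseudohermitian torsion must vanish, and then invoke the known Sasakian rigidity of Chang--Chiu. The first and last stages are fine. The gap is in the middle step, which is precisely the new content of the paper and which you describe only as ``the divergence of a carefully chosen horizontal vector field \ldots\ forcing $A_{11}f\equiv 0$''. A single integration by parts of that kind is what earlier attempts tried and is not known to close; indeed, this is exactly why the three-dimensional case remained open after \cite{CC09a,CC09b,ChW12}. Two specific ingredients are missing from your sketch.

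First, before any divergence argument one needs pointwise information beyond the horizontal Hessian. The paper uses a \emph{vertical} Bochner identity (Proposition~\ref{remn=1}) together with the previously computed mixed second derivatives $\nabla^2 f(\xi,\cdot)$ to prove $A(\nabla f,\nabla f)=0$ (Lemma~\ref{neww1}); from this and the Lichnerowicz condition one gets $|\nabla f|^2|A|=-\sqrt{2}\,A(\nabla f,J\nabla f)$ and, crucially, $g(\nabla f,\nabla |A|^2)=0$ (obtained by computing $g(A,\nabla^3 f(\cdot,\cdot,\xi))$ in two ways). None of this is visible in your plan, and without $A(\nabla f,\nabla f)=0$ and $\nabla_{\nabla f}|A|^2=0$ the subsequent integrations do not simplify.

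Second, the actual vanishing $A\equiv 0$ is not obtained from one divergence identity but from the Li--Wang iteration with \emph{powers} of $f$: one estimates $\int_M |A|^3 f^{2(k+1)}\,\vol$ via $\int_M |A|^3 f^{2k}|\nabla f|^2\,\vol$ (using $\triangle f=2f$ and $\nabla_{\nabla f}|A|^k=0$), then integrates $\int_M |A|^2 f^{2k} A(\nabla f,J\nabla f)\,\vol$ by parts to produce a factor $(2k+1)$ on the left against a bounded right-hand side, and lets $k\to\infty$ to force a contradiction unless $A\equiv 0$ (Theorem~\ref{t:A vansihes if div 3D}). This power trick is the decisive idea; a one-shot divergence identity would at best yield $A\,f\equiv 0$, and your proposed passage from that to $A\equiv 0$ by ``unique continuation for the resulting overdetermined elliptic-type system'' is not a system for $A$ and does not follow. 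Also, a minor point: in the equality case the inequality \eqref{e:obata ineq} already gives $(\nabla^2 f)_{[-1]}=0$, i.e.\ $f_{11}=0$ outright, not $f_{11}=c\,A_{11}f$.
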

This conjecture was proved in the case of a vanishing
pseudohermitian torsion (Sasakian case) in \cite{CC09a} for $n\ge
2$ and in \cite{CC09b} for $n=1$.  The non-Sasakian case was
considered in \cite{ChW12} where Conjecture~\ref{conj1} was
established under some assumptions on the divergence and
the second covariant derivative of the pseudohermitian torsion.

A dimension independent  proof of the results due to Greenleaf,
S.-Y. Li \& H.-S. Luk, and H.-L. Chiu   based on the
non-negativity of the CR-Paneitz operator can be found in the
Appendix of \cite{IVO}. The key to this direct exposition of the
known results is the last inequality in the proof of
\cite[Theorem~8.8]{IVO} which states that for any smooth function
$f$ on {a compact pseudohermitian manifold $(M,\theta)$
satisfying \eqref{condm}} we have
\begin{equation} \label{e:obata ineq}
0\geq\int_M\Big[\left (-\frac {n+1}{n}\lambda+k_0\right )|\gr|^2
+ \left
|(\nabla^2f)_{[-1]}\right |^2 -\frac {3}{2n}P_f(\gr)\Big]\vol
\end{equation}
 Here, following \cite{L1,GL88} for a given function $f$ we define the one form,
\begin{equation}  \label{e:Pdef}
P(X)\equiv P_{f}(X)=\nabla ^{3}f(X,e_{b},e_{b})+\nabla
^{3}f(JX,e_{b},Je_{b})+4nA(X,J\nabla f)
\end{equation}%
and also  a fourth order differential operator  (the so called CR-Paneitz operator in \cite{Chi06}),
\begin{equation}  \label{e:Cdef}
Cf=-\nabla ^{\ast }P=(\nabla_{e_a} P)({e_a})=\nabla
^{4}f(e_a,e_a,e_{b},e_{b})+\nabla
^{4}f(e_a,Je_a,e_{b},Je_{b})-4n\nabla^* A(J\nabla
f)-4n\,g(\nabla^2 f,JA),
\end{equation}
{where $\{e_1,\dots,e_{2n}\}$ is an orthonormal basis and a
summation over repeated indices is understood.}

Taking into account the divergence formula on a compact
pseudohermitian manifold,  the non-negativity condition of the
Paneitz operator means that we have
\begin{equation*}
\int_M f\cdot Cf \vol=-\int_MP_f(\gr) \vol \geq 0
\end{equation*}
for any smooth function $f$. In the three dimensional case this
condition is a CR invariant since it is independent of the choice
of the contact form. This follows from the conformal invariance of
$C$ proven in \cite{Hi93}.

A new method to attack the problem was developed by the authors in
\cite{IVO} where Conjecture~\ref{conj1} was proved under the
additional assumption of a divergence-free torsion. The new
approach of \cite{IVO} is based on the explicit form of the
Hessian with respect to the Tanaka-Webster connection of an
extremal eigenfunction $f$, i.e., an eigenfunction with eigenvalue
$n/(n+1)k_0$, and the formula for the pseudohermitian curvature.
Specifically, in the extremal case inequality \eqref{e:obata ineq}
is used in \cite{IVO} to determine, among other things, the
horizontal Hessian of an extremal eigenfunction $f, \triangle
f=\frac{n}{n+1}k_0f$, which after a rescaling can be put in the
form \cite{IVO}
\begin{equation}\label{e:hessian}
 \nabla ^{2}f(X,Y)=-fg(X,Y)-df(\xi )\omega (X,Y), \qquad X, Y \in H=Ker\, \theta.
\end{equation}
{The new idea in \cite{IVO} is  to determine the full
Hessian with respect to the Tanaka-Webster connection based on
\eqref{e:hessian}.}  One of the notable consequences of this is
the elliptic equation satisfied by the extremal first
eigenfunction, which allows the use of Riemannian unique
continuation results. This fact was later used in \cite{LW} where
the divergence-free condition of \cite{IVO} was shown to be
superfluous for the results of \cite{IVO} to hold true. In fact,
in this very recent paper \cite{LW} S.-Y. Li and X. Wang established
Conjecture~\ref{conj1} for $n>1$ completing our approach
\cite{IVO} with the introduction of a new integration by parts
idea, cf. \cite[Lemma 4]{LW}, involving suitable powers of the
extremal function.
\begin{rmrk}
The authors of \cite{LW} did not credit the paper \cite{IVO} for
the similar method and a number of  results in \cite{IVO} which
were used or reproved in \cite{LW} using  complex notation. For
example, the formulas in the crucial \cite[Proposition 4]{LW} are
stated in \cite[Lemma~3.1]{IVO}, \cite[Lemma~4.3]{IVO}, equation
(4.4) in \cite{IVO}, and the last equation in the proof of
 \cite[Lemma~4.1]{IVO}.  In addition, \cite[Lemma 2 ]{LW} follows directly
 (is a special case of) of equations (4.2) and (4.3) in \cite{IVO}. Nevertheless,
\cite{LW} contains a new and a very important step, namely
\cite[Lemma 4]{LW}, which makes possible the above explained
improvement of the result of \cite{IVO} in the case $n>1$.
\end{rmrk}

In the three dimensional case, \cite[Proposition 5]{LW} is not
correctly proved and a correct proof is contained in Section~6.2
of \cite{IVO}.  Furthermore, the proof of Conjecture~\ref{conj1}
presented in \cite{LW} for dimension three has a gap since formula
(4.8) in \cite{LW} does not hold in dimension three which
{prevents the use }of Lemma~3 and equality (4.3). Therefore
\cite[Corollary~1]{LW} can not be applied in the three dimensional
case.  The purpose of this paper is to settle Conjecture~\ref{conj1}
using dimension three where we  prove the following Theorem.

\begin{thrm}\label{main2} Let $(M, \theta)$ be a compact strictly pseudoconvex
pseudohermitian CR manifold of dimension three with a non-negative
CR-Paneitz operator. Suppose there is a positive constant $k_0$
such that the pseudohermitian Ricci curvature $Ric$ and the
pseudohermitian torsion $A$ satisfy the inequality
\begin{equation}  \label{condm}
Ric(X,X)+ 4A(X,JX)\geq k_0 g(X,X), \qquad X \in H=Ker\, \theta.
\end{equation}
If  $\lambda=\frac{1}{2}k_0$ is an eigenvalue of the
sub-Laplacian,  then up-to a scaling of $\theta$ by a positive
constant $(M,\theta)$ is the standard (Sasakian) CR structure on
the unit three-dimensional sphere in $\mathbb{C}^{2}$.
\end{thrm}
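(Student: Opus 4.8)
The plan is to extract everything from the equality case of inequality \eqref{e:obata ineq}, specialized to $n=1$, and then to determine the full Tanaka-Webster Hessian of an extremal eigenfunction as in Section~6.2 of \cite{IVO}. Fix a non-constant real eigenfunction $f$ with $\triangle f=\tfrac12 k_0 f$. With $n=1$ and $\lambda=\tfrac12 k_0$ the coefficient $-\tfrac{n+1}{n}\lambda+k_0$ in \eqref{e:obata ineq} vanishes, so the inequality reduces to
\begin{equation*}
0\ \geq\ \int_M\Big[\,\big|(\nabla^2f)_{[-1]}\big|^2-\tfrac32\,P_f(\gr)\,\Big]\vol .
\end{equation*}
Because the CR-Paneitz operator is non-negative we have $-\int_M P_f(\gr)\vol=\int_M f\,Cf\,\vol\ge 0$, so both integrands are non-negative while their sum is non-positive; hence $(\nabla^2 f)_{[-1]}\equiv 0$ on $M$ and $\int_M f\,Cf\,\vol=0$. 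Testing the non-negativity of the CR-Paneitz operator on $f+th$ and using that the operator is formally self-adjoint, the vanishing of the quadratic term forces $\int_M h\,Cf\,\vol=0$ for every test function $h$, i.e. $Cf\equiv 0$. Combining $(\nabla^2 f)_{[-1]}\equiv 0$ with the trace identity linking $\mathrm{tr}_H\nabla^2 f$ and $\triangle f$, and with the fact that the skew part of the horizontal Hessian is a universal multiple of $(\xi f)\,\omega$ (dictated by the torsion of the Tanaka-Webster connection), I would obtain, after a homothety of $\theta$ normalizing $k_0$, the horizontal Hessian in the normalized form \eqref{e:hessian}.

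Next, following \cite{IVO}, I would determine the remaining components $\nabla^2 f(\xi,X)$, $\nabla^2 f(X,\xi)$, $\nabla^2 f(\xi,\xi)$ of the full Tanaka-Webster Hessian: differentiate \eqref{e:hessian} in horizontal directions, commute covariant derivatives using the pseudohermitian curvature and torsion, and invoke the Bianchi identities, so that all third covariant derivatives of $f$ are expressed through $f$, $\xi f$ and $A$. In dimension three the curvature is encoded by the Webster scalar curvature together with $A$, and this is where the argument genuinely differs from the case $n>1$: the analogue of \cite[(4.8)]{LW} fails, and the missing structural input must instead be supplied by $Cf\equiv 0$ together with \eqref{e:Cdef}, which closes the system. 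I expect this to yield an overdetermined first-order system for $(f,\xi f)$ and, after one further differentiation, a genuinely elliptic second-order (Riemannian) equation for $f$; by Riemannian unique continuation $f$ cannot vanish to infinite order anywhere, so $M_0:=\{\,df\ne0\,\}$ is open and dense and pointwise identities valid on $M_0$ extend to all of $M$.

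In the third step I would feed the explicit full Hessian into the contracted Bianchi identity and into \eqref{e:Cdef} and integrate by parts against a non-negative weight $\phi(f)$ — using an integration-by-parts identity with powers of the extremal function, in the spirit of \cite[Lemma~4]{LW} but in a form valid in dimension three — to arrive at an inequality of the type $\int_M\phi(f)\,|A|^2\,\vol\le 0$ with $\phi(f)\ge0$ and $\phi(f)>0$ on $M_0$. This forces $A\equiv0$ on $M_0$, hence $A\equiv0$ on $M$ by density, so $(M,\theta)$ is a compact Sasakian three-manifold; on such a manifold \eqref{condm} becomes $Ric\ge k_0 g$ on $H$ and, by hypothesis, $\tfrac12 k_0$ is an eigenvalue of the sub-Laplacian, so the torsion-free ($n=1$) case of Conjecture~\ref{conj1}, established in \cite{CC09b} (see also \cite{CC09a} and Section~6.2 of \cite{IVO}), identifies $(M,\theta)$ — up to the homothety performed above — with the standard Sasakian CR structure on the unit sphere in $\mathbb{C}^2$.

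The main obstacle is the second step. For $n>1$ the Hessian has more components and the pseudohermitian curvature carries information beyond the scalar curvature — exactly the slack exploited through \cite[(4.8)]{LW} and \cite[Corollary~1]{LW} — whereas in dimension three none of this is available, so the non-negativity of the CR-Paneitz operator (here, its vanishing on $f$) must play that role, with every torsion term tracked precisely. This is the bookkeeping that is misargued in \cite[Proposition~5]{LW}; getting it right, and then choosing the weight $\phi(f)$ in the third step so that all curvature and boundary contributions cooperate, is where the three-dimensional computation of Section~6.2 of \cite{IVO} has to be adapted and carried through in full.
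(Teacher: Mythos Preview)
Your overall architecture matches the paper's: derive the horizontal Hessian \eqref{e:hessian} from equality in \eqref{e:obata ineq}, determine the mixed components of the Hessian, show $A\equiv 0$ by an integration-by-parts argument involving powers of $f$, and then invoke the torsion-free case of \cite{CC09b}. However, the two concrete mechanisms that make the paper's proof work are missing from your plan.

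First, the paper does \emph{not} use the pointwise equation $Cf\equiv 0$ to ``close the system'' for the full Hessian. The mixed components $\nabla^2 f(\xi,\cdot)$, $\nabla^2 f(\cdot,\xi)$ are already available from \cite{IVO} in the form \eqref{n12}--\eqref{n13}. What is genuinely new is Lemma~\ref{neww1}: the \emph{vertical Bochner formula} (Proposition~\ref{remn=1}), integrated and combined with \eqref{e:Afrom2lemmas} and the integrated Paneitz condition $\int_M P_f(\nabla f)\,\vol=0$, forces $A(\nabla f,\nabla f)=0$. This single algebraic fact is the crux: it simplifies \eqref{n12}--\eqref{n13} to \eqref{e:vhessian}, yields via Corollary~\ref{neww2} the identity $|\nabla f|^2|A|=-\sqrt{2}\,A(\nabla f,J\nabla f)$, and after comparing two expressions for $g(A,\nabla^3 f(\cdot,\cdot,\xi))$ gives the pointwise law $g(\nabla f,\nabla|A|^k)=0$ for all $k>0$. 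Your proposed substitute, the single fourth-order scalar constraint $Cf\equiv 0$, is not shown to produce $A(\nabla f,\nabla f)=0$, and the paper never appeals to it.

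Second, the integration-by-parts step is not an inequality of the type $\int_M\phi(f)\,|A|^2\,\vol\le 0$. Using $g(\nabla f,\nabla|A|^k)=0$ and \eqref{e:formulanormA2}, the paper evaluates $\int_M|A|^3 f^{2k}|\nabla f|^2\,\vol$ in two ways (equations \eqref{e:A3calc0}--\eqref{e:A3calc2}), obtaining an estimate whose two sides scale differently in $k$; letting $k\to\infty$ gives a contradiction unless $A\equiv 0$. The odd power $|A|^3$, the identity $g(\nabla f,\nabla|A|^k)=0$, and the growing-$k$ contradiction are precisely the ingredients your outline does not supply.
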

The value of the scaling is determined, for example, by the fact
that the standard pseudohermitian structure on the unit sphere has
first eigenvalue equal to $2$. The corresponding eigenspace is
spanned by the restrictions of all linear functions to the sphere.

The proof or Theorem \ref{main2} is based on the explicit form of
the Hessian \cite{IVO} with respect to the Tanaka-Webster
connection of an extremal eigenfunction $f$   and the integration
by parts involving powers of the extremal eigenfunction introduced
in \cite{LW}. {After these initial steps we prove
Theorem~\ref{main2} as a consequence of Theorem~\ref{t:A vansihes
if div 3D} taking into account the already established CR Obata
theorem  for pseudohermitian manifold with a vanishing
pseudohermitian torsion. Thus, the new result here is Theorem
\ref{t:A vansihes if div 3D} which shows that if on a three
dimensional compact  pseudohermitian manifold satisfying
\eqref{condm} and having, further, non-negative Paneitz operator
we have an eigenfunction $f$ with a horizontal Hessian given by
the above formula \eqref{e:hessian},  then the pseudohermitian
torsion vanishes, i.e., we have a Sasakian structure.} The new
idea in dimension three is to compare the calculated in \cite{IVO}
Ricci tensor with the Lichnerowicz type assumption \eqref{condm}
which results in the formula  for the full Hessian with respect to
the Tanaka-Webster connection of an extremal eigenfunction
expressed in Lemma~\ref{neww1}.

\begin{rmrk}
Following \cite{IV3}, a correction of
 the results of \cite{LW} in the three dimensional
case appeared in  \cite{LW1}. The correct argument uses the above
mentioned idea of \cite{IVO} which allows the "recovery" of
formula (5.2) from \cite{IV3}  for the full Hessian of an extremal
eigenfunction in dimension three.  This is a crucial fact in the
proofs of \cite[Theorem~9 \& Theorem~10]{LW1}. In the higher
dimensional case, \cite[Theorem~5 \& Theorem~8]{LW1} appeared
earlier in \cite{IVO} under the additional assumption that the
torsion is divergence free, see \cite[Theorem~1.3]{IVO}. In
\cite{LW1} the  "novelty" in the general torsion case is again the
non-trivial reduction to the zero torsion case, similarly to
\cite{IVO}, while the remaining parts of the argument are
identical to those in \cite{IVO}. Despite their priority these
results of \cite{IVO} are not mentioned in \cite{LW1}. Finally, in
the three dimensional case, the first complete proof of
\cite[Theorem~10]{LW1} is given in the earlier paper \cite{IV3}
while the divergence-free torsion case appeared in \cite{IVO},
both results are not cited in \cite{LW1}.  In fact, the earlier
paper \cite{IV3} is not referenced in \cite{LW1}.
\end{rmrk}

\begin{conv}
\label{conven} \hfill\break\vspace{-15pt}

\begin{enumerate}
\item[a)] We shall use $X,Y,Z,U$ to denote horizontal vector fields, i.e. $%
X,Y,Z,U\in H=Ker\, \theta$.

\item[b)] $\{e_1,\dots,e_{2n}\}$ denotes a local orthonormal basis of the
horizontal space $H$.

\item[c)] The summation convention over repeated vectors from the basis $%
\{e_1,\dots,e_{2n}\}$ will be used. For example, for a (0,4)-tensor $P$, the
formula $k=P(e_b,e_a,e_a,e_b)$ means
\begin{equation*}
k=\sum_{a,b=1}^{2n}P(e_b,e_a,e_a,e_b);
\end{equation*}
\end{enumerate}
\end{conv}

\textbf{Acknowledgments}  The research is partially supported by
Contract ``Idei", DID 02-39/21.12.2009. S.I is partially supported
by Contract 181/2011 with the University of Sofia `St.Kl.Ohridski'

\section{Pseudohermitian manifolds and the Tanaka-Webster connection}

In this section we will briefly review the basic notions of the
pseudohermitian geometry of a CR manifold. Also, we recall some results (in
their real form) from \cite{T,W,W1,L1}, see also \cite{DT,IVZ,IV2}, which we
will use in this paper.

A CR manifold is a smooth manifold $M$ of real dimension 2n+1,
with a fixed n-dimensional complex sub-bundle $\mathcal{H}$ of the
complexified tangent bundle $\mathbb{C}TM$ satisfying $\mathcal{H}
\cap \overline{\mathcal{H}}=0$ and $[
\mathcal{H},\mathcal{H}]\subset \mathcal{H}$. If we let $H=Re\,
\mathcal{H}\oplus\overline{\mathcal{H}}$, the real sub-bundle $H$
is equipped with a formally integrable almost complex structure
$J$. We assume that $M$ is oriented and there exists a globally
defined compatible contact form $\theta$ such that the
\emph{horizontal space} is given by ${H}=Ker\,\theta.$ In other
words, the hermitian bilinear form $$ 2g(X,Y)=-d\theta(JX,Y) $$ is
non-degenerate. The CR structure is called strictly pseudoconvex
if $g$ is a positive definite tensor on $H$. The vector field
$\xi$ dual to $\theta$ with respect to $g $ satisfying
$\xi\lrcorner d\theta=0$ is called the Reeb vector field. The
almost complex structure $J$ is formally integrable in the sense
that
\begin{equation*}
([JX,Y]+[X,JY])\in {H}
\end{equation*}
and the Nijenhuis tensor
\begin{equation*}
N^J(X,Y)=[JX,JY]-[X,Y]-J[JX,Y]-J[X,JY]=0.
\end{equation*}
A CR manifold $(M,\theta,g)$ with a fixed compatible contact form $\theta$
is called \emph{a pseudohermitian manifold}%
\index{pseudohermitian manifold}. In this case the 2-form
\begin{equation*}
d\theta_{|_{{H}}}:=2\omega
\end{equation*}
is called the fundamental form. Note that the contact form is determined up
to a conformal factor, i.e. $\bar\theta=\nu\theta$ for a positive smooth
function $\nu$, defines another pseudohermitian structure called
pseudo-conformal to the original one.

\subsection{The Tanaka-Webster connection}

The Tanaka-Webster connection \cite{T,W,W1} is the unique linear connection $%
\nabla$ with torsion $T$ preserving a given pseudohermitian
structure, i.e., it has the properties
\begin{equation}  \label{torha}
\begin{aligned}& \nabla\xi=\nabla J=\nabla\theta=\nabla g=0,\\
& T(X,Y)=d\theta(X,Y)\xi=2\omega(X,Y)\xi, \quad T(\xi,X)\in {H},
\\ & g(T(\xi,X),Y)=g(T(\xi,Y),X)=-g(T(\xi,JX),JY). \end{aligned}
\end{equation}
For a smooth function $f$ on a pseudohermitian manifold $M$ we denote by $\nabla
f $ its horizontal gradient,
\begin{equation}\label{e:hor grad}
g(\nabla f,X)=df(X).
\end{equation}
The horizontal sub-Laplacian $\triangle f$ and the norm of the horizontal
gradient $\nabla f =df(e_a)e_a$ of a smooth function $f$ on $M$ are defined
respectively by
\begin{equation}  \label{lap}
\triangle f\ =-\ tr^g_H(\nabla df)\ =\nabla^*df= -\ \nabla df(e_a,e_a),
\qquad |\nabla f|^2\ =\ df(e_a)\,df(e_a).
\end{equation}
The function $f\not\equiv 0$ is an eigenfunction of the sub-Laplacian if
\begin{equation}  \label{eig}
\triangle f =\lambda f,
\end{equation}
where $\lambda$ is a necessarily non-negative constant.

It is well known that the endomorphism $T(\xi,.)$ is the obstruction a
pseudohermitian manifold to be Sasakian. The symmetric endomorphism $T_\xi:{H%
}\longrightarrow {H}$ is denoted by $A$,
$$A(X,Y)\overset{def}{=}T(\xi,X,Y),$$ and is called \emph{the
(Webster) torsion of the pseudohermitian manifold or
pseudohermitian torsion.} It is a completely trace-free tensor of
type (2,0)+(0,2),
\begin{equation}  \label{tortrace}
A(e_a,e_a)=A(e_a,Je_a)=0, \quad A(X,Y)=A(Y,X)=-A(JX,JY).
\end{equation}
Let $R=[\bi,\bi]-\bi_{[,]}$ be the curvature of the Tanaka-Webster
connection. The pseudohermitian Ricci tensor $Ric$, the
pseudohermitian scalar curvature $S$ and the pseudohermitian Ricci
2-form $\rho$ are defined by
\begin{equation*}
Ric(C,B)=R(e_a,C,B,e_a), \quad S=Ric(e_a,e_a),\quad
\rho(C,B)=\frac12R(C,B,e_a,Ie_a), \quad C,B\in\Gamma(TM).
\end{equation*}
{As well known $\rho$, sometimes called the Webster Ricci tensor, is the (1,1)-part of $Ric$. In dimension
three we have $Ric(.,.)=\rho(J.,.)$.} We refer the reader to \cite{IVO}
for a quick summary using real expression of the well known
properties of the curvature $R$ of the Tanaka-Webster connection
established in \cite{W,W1,L1}, see also \cite{DT,IVZ,IV2}.

\subsection{The Ricci identities for the Tanaka-Webster connection}

We shall use repeatedly the following Ricci identities of order two and
three for a smooth function $f$, see also \cite{IVO,IV2},
\begin{equation}  \label{e:ricci identities}
\begin{aligned} & \nabla^2f (X,Y)-\nabla^2f(Y,X)=-2\omega(X,Y)df(\xi)\\ &
\nabla^2f (X,\xi)-\nabla^2f(\xi,X)=A(X,\nabla f)\\ & \nabla^3 f
(X,Y,Z)-\nabla^3 f(Y,X,Z)=-R(X,Y,Z,\nabla f) - 2\omega(X,Y)\nabla^2f
(\xi,Z)\\ &\nabla ^{3}f(X,Y,Z)-\nabla ^{3}f(Z,Y,X)=-R(X,Y,Z,\nabla
f)-R(Y,Z,X,\nabla f)-2\omega (X,Y)\nabla ^{2}f(\xi ,\ Z)\\ &\hskip 1.8in
-2\omega (Y,Z)\nabla ^{2}f(\xi ,X) +2\omega (Z,X)\nabla ^{2}f(\xi ,Y)
+2\omega (Z,X)A(Y,\nabla f) \\ & \nabla^3 f(\xi,X,Y)-\nabla^3
f(X,\xi,Y)=(\nabla_{\gr}A)(Y,X)-(\nabla_YA)(\nabla f,X)-\nabla^2 f(AX,Y)\\
&\nabla^3 f(X,Y,\xi)-\nabla^ 3 f (\xi,X,Y)=\nabla^2f (AX,Y)+\nabla^2f (X,AY)
+(\nabla_X A)(Y,\nabla f)+(\nabla_Y A)(X,\nabla f)\\ &\hskip4.4in
-(\nabla_{\nabla f}) A( X,Y). \end{aligned}
\end{equation}

An important consequence of  the first Ricci identity is the following fundamental formula
\begin{equation}  \label{xi1}
g(\nabla^2f,\omega)=\nabla^2f(e_a,Je_a)=-2n\,df(\xi).
\end{equation}
On the other hand, by \eqref{lap} the trace with respect to the metric is the negative sub-Laplacian
\[
g(\nabla^2f,g)=\nabla^2f(e_a,e_a)=-\triangle f.
\]

We also recall the horizontal divergence theorem allowing "integration by parts" \cite{T}. Let $(M,
g,\theta) $ be a pseudohermitian manifold of dimension $2n+1$. For a fixed
local 1-form $\theta$ the form
\begin{equation*} 
\vol=\theta\wedge\omega^{n}
\end{equation*}
is a globally defined volume form since $\vol$ is independent on the
local one form $\theta$. We define the (horizontal) divergence of a horizontal vector
field/one-form $\sigma\in\Lambda^1\, (H)$ defined by
\begin{equation*}
\nabla^*\, \sigma\ =-tr|_{H}\nabla\sigma=\ -(\nabla_{e_a}\sigma)e_a.
\end{equation*}
The divergence formula \cite{T} gives the "integration by parts" identity for a one-form of compact support
\begin{equation*}
\int_M (\nabla^*\sigma)\vol\ =\ 0.
\end{equation*}

\section{The vertical Bochner formula}
We recall explicitly the vertical Bochner formula from \cite{IVO}
since it will provide an  important step in the proof of the main
result. The proof below is contained in Remark 3.5 of \cite{IVO}.
\begin{prop} [\cite{IVO},"vertical Bochner
formula"]\label{remn=1}  For any smooth function $f$ on a pseudohermitian
manifold of dimension $(2n+1)$ the following identity holds
\begin{equation}  \label{e:vertical Bochner}
-\triangle (\xi f)^2 = 2|\nabla(\xi f)|^2-2df(\xi)\cdot \xi
(\triangle f) + 4df(\xi)\cdot g(A,\nabla^2 f) -4df
(\xi)(\nabla^*A)(\nabla f).
\end{equation}
\end{prop}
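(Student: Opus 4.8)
The plan is to peel off a Leibniz-rule term and then reduce everything to a commutation identity between the horizontal sub-Laplacian and differentiation along the Reeb field $\xi$, which in turn is nothing but a contracted form of the third-order Ricci identities already recorded in \eqref{e:ricci identities}. Write $u=df(\xi)=\xi f$, a smooth function on $M$. Since $\triangle=\nabla^*d$, expanding $\nabla^2(u^2)=2\,du\otimes du+2u\,\nabla^2u$ and tracing over the horizontal frame gives the product rule $\triangle(u^2)=2u\,\triangle u-2|\nabla u|^2$, so that
\[
-\triangle(\xi f)^2=2|\nabla(\xi f)|^2-2\,df(\xi)\cdot\triangle(\xi f).
\]
Thus the entire content of the proposition is an explicit formula for $\triangle(\xi f)$.

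First I would rewrite $\triangle(\xi f)$ and $\xi(\triangle f)$ through the third covariant derivative of $f$. Because $\nabla\xi=0$, for any vector field $Z$ one has $\nabla^2f(Z,\xi)=Z(df(\xi))-df(\nabla_Z\xi)=Z(\xi f)=d(\xi f)(Z)$, i.e. the $1$-form $d(\xi f)$ is exactly the contraction of $\nabla^2f$ with $\xi$ in the last slot. Differentiating once more (using $\nabla\xi=0$ again) and tracing over an orthonormal horizontal frame yields $\triangle(\xi f)=-\nabla^3f(e_a,e_a,\xi)$. The same computation, using $\nabla g=0$ so that the trace commutes with $\nabla_\xi$, gives $\xi(\triangle f)=-\nabla^3f(\xi,e_a,e_a)$.

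The core step is then the last of the Ricci identities in \eqref{e:ricci identities}: setting $X=Y=e_a$ and summing,
\[
\nabla^3f(e_a,e_a,\xi)-\nabla^3f(\xi,e_a,e_a)=\nabla^2f(Ae_a,e_a)+\nabla^2f(e_a,Ae_a)+2(\nabla_{e_a}A)(e_a,\nabla f)-(\nabla_{\nabla f}A)(e_a,e_a).
\]
Here I would use the symmetry $A(e_a,e_b)=A(e_b,e_a)$ from \eqref{tortrace} to recognize each of the first two terms as $g(A,\nabla^2f)$; the definition $\nabla^*A=-(\nabla_{e_a}A)(e_a,\cdot)$ to turn the third term into $-2(\nabla^*A)(\nabla f)$; and the trace-free property $A(e_a,e_a)=0$ together with $\nabla g=0$ to kill the last term, since $(\nabla_{\nabla f}A)(e_a,e_a)=\nabla_{\nabla f}(\mathrm{tr}_H A)=0$. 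Combining with the previous paragraph gives the commutation formula $\triangle(\xi f)=\xi(\triangle f)-2\,g(A,\nabla^2f)+2(\nabla^*A)(\nabla f)$, and inserting this into the Leibniz identity above reproduces \eqref{e:vertical Bochner} verbatim.

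No genuine analytic obstacle arises; the only thing that needs real care is bookkeeping — keeping the sign conventions for $\triangle$, for $\nabla^*$, and for the slot ordering of $\nabla^3 f$ consistent with \eqref{e:ricci identities}, and correctly using the symmetry and trace-freeness \eqref{tortrace} of $A$ in the two contractions $\nabla^2f(Ae_a,e_a)$ and $(\nabla_{\nabla f}A)(e_a,e_a)$. Everything after that is algebra with the contracted Ricci identities.
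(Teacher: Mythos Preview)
Your proof is correct and follows essentially the same route as the paper: both arguments reduce the identity to the contracted form of the last Ricci identity in \eqref{e:ricci identities}, using $\nabla\xi=0$ to identify $\nabla^2 f(\cdot,\xi)$ with $d(\xi f)$ and the trace-freeness of $A$ to eliminate the $(\nabla_{\nabla f}A)(e_a,e_a)$ term. The only cosmetic difference is that the paper expands $-\tfrac12\triangle(\xi f)^2=\nabla^3f(e_a,e_a,\xi)\,df(\xi)+|\nabla(\xi f)|^2$ directly and then substitutes, whereas you first isolate $\triangle(\xi f)$ via the Leibniz rule and then derive the commutator $\triangle(\xi f)-\xi(\triangle f)$; the content is identical.
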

\begin{proof}
To prove \eqref{e:vertical Bochner}  we use the last of the Ricci
identities \eqref{e:ricci identities} and the fact that the
torsion is trace free to obtain
\begin{eqnarray*}
-\frac{1}{2}\triangle (\xi f)^{2} &=&\nabla ^{3}f(e_{a},e_{a},\xi
)df(\xi
)+\nabla ^{2}f(e_{a},\xi )\nabla ^{2}f(e_{a},\xi ) \\
&=&\left[ \nabla ^{3}f(\xi ,e_{a},e_{a})+2g(\nabla
^{2}f,A)-2(\nabla ^{\ast
}A)(\nabla f)\right] df(\xi )+|\nabla (\xi f)|^{2} \\
&=&|\nabla (\xi f)|^{2}-df(\xi )\cdot \xi (\triangle f)+2df(\xi
)\cdot g(A,\nabla ^{2}f)-2df(\xi )(\nabla ^{\ast }A)(\nabla f),
\end{eqnarray*}%
which completes the proof of \eqref{e:vertical Bochner}.
\end{proof}

\section{The Hessian of an extremal function in the extremal three dimensional case}

In this section we recall some results from \cite{IVO} determining
the full Hessian of an "\textit{extremal first eigenfunction}"
that is an eigenfunction with the smallest possible eigenvalue under the Lichnerowicz type condition.

Let $M$ be a compact strictly
pseudoconvex CR 3-manifold   satisfying
\begin{equation*}
Ric(X,X)+4A(X,JX)\geq k_0 \,g(X,X)
\end{equation*}
such that the CR-Paneitz operator is \emph{non-negative on} $f$.
If $\frac12k_0$ is an eigenvalue of the sub-Laplacian, $\triangle
f=\frac12k_0f$ then the corresponding eigenfunctions satisfy the
next identities, cf. Section 3 of \cite{IVO},
\begin{equation}  \label{eq7}
\nabla^2f(X,Y)=-\frac{k_0}{4}fg(X,Y)-df(\xi)\omega(X,Y).
\end{equation}
{Furthermore by Remark~3.2 of \cite{IVO}, we have}
\begin{equation}  \label{eq14}
Ric(\nabla f,\nabla f)+4A(J\nabla f,\nabla f)=k_0|\nabla f|^2,
\hskip.7in \int_M P_f(\gr)\vol=0.
\end{equation}
{Since the horizontal space is two dimensional we can use
$\gr$, $J\gr$ as a basis at the points where $|\gr|\not=0$.  In
fact, we have $|\gr|\not=0$ almost everywhere. This follows from
\cite[Lemma~5.1]{IVO} showing that $f$ satisfies a certain
elliptic equation which implies that $f$ cannot vanish on any open
set since otherwise $f\equiv 0$ which is a contradiction.}

{The "mixed" parts of the Hessian are given in the second
and the third equations in the proof of Theorem 5.2 in \cite{IVO}
as follows}
\begin{equation}\label{n12}
\begin{aligned}
\nabla^{2} f(\xi ,J\nabla f)& =-|\nabla f|^{2}+A(J\nabla f,\nabla f)=-\frac{1%
}{4}Ric(\nabla f,\nabla f)=-\frac{S}{8}|\nabla f|^{2},\\
 \nabla ^{2}f(\xi ,\nabla f)& =-\frac{1}{3}A(\nabla
f,\nabla f),\qquad \nabla ^{2}f(\nabla f,\xi )=\frac{2}{3}A(\nabla
f,\nabla f).
\end{aligned}
\end{equation}
{The Ricci identities together with \eqref{n12} imply}
\begin{equation}\label{n13}
\nabla^{2} f(J\nabla f, \xi) =-|\nabla f|^{2}+ 2A(J\nabla f,\nabla
f).
\end{equation}
Using a homothety we can reduce to the case $\lambda_1=2$ and
$k_0=4$, which are the values for the standard Sasakian round
3-sphere. Henceforth, we
shall work under these assumptions. Thus, for  an extremal first eigenfunction $f$ (by definition $%
f\not\equiv 0$)   we have the equalities
\begin{equation} \label{eq1}
\begin{aligned} \lambda = 2, \qquad \triangle f=2 f, \qquad
\int_M(\triangle f)^2\vol=2\int_M|\nabla f|^2\vol.
\end{aligned}
\end{equation}
In addition, the horizontal Hessian of $f$ satisfies \eqref{eq7},
which with the assumed normalization takes the form given in
equation \eqref{e:hessian}.

\section{Vanishing of the pseudohermitian torsion.}\label{ss:3D}

In this section we show the vanishing of the pseudo-hermitian
torsion. We shall assume, unless explicitly stated otherwise, that
$M$ is a  compact strictly pseudoconvex pseudohermitian CR
manifold of dimension three for which \eqref{condm} holds and $f$
is a smooth function on $M$ satisfying \eqref{e:hessian}. In
particular, we have done the normalization, if necessary, so that
\eqref{condm} holds with $k_0=4$.

\begin{lemma}\label{neww1}
Let $f$ be an extremal eigenfunction of the sublaplacian on a
compact strongly pseudoconvex 3-dimensional pseudohermitian
manifold. Then we have
\begin{equation}\label{new1}
A(\gr,\gr)=0
\end{equation}
and the "mixed" derivatives are given by
\begin{equation}
\label{e:vhessian}
 \nabla ^{2}f(\xi ,Y)=df(JY)+A(Y,\nabla f),\qquad \nabla ^{2}f(Y,\xi
)=df(JY)+2A(Y,\nabla f).
\end{equation}
\end{lemma}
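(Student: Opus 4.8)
The plan is to derive \eqref{new1} first, since the second displayed formula will then follow essentially from the Ricci identities together with the already-recorded ``mixed'' Hessian data \eqref{n12}. For \eqref{new1}, recall from \eqref{n12} that $\nabla^2f(\xi,\nabla f) = -\tfrac13 A(\nabla f,\nabla f)$ and $\nabla^2f(\nabla f,\xi) = \tfrac23 A(\nabla f,\nabla f)$; these were obtained in \cite{IVO} but do not by themselves force $A(\nabla f,\nabla f)$ to vanish. The new input in dimension three is to compare the Ricci tensor computed in \cite{IVO} (from the explicit Hessian \eqref{e:hessian}) against the Lichnerowicz hypothesis \eqref{condm}. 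Concretely, I would differentiate \eqref{e:hessian} to get $\nabla^3 f$ on horizontal directions, feed this into the third and fourth Ricci identities in \eqref{e:ricci identities} to express $R(X,Y,Z,\nabla f)$ — and hence, after tracing, $Ric(\nabla f,\cdot)$ — in terms of $f$, $df(\xi)$, $A$, and $\nabla A$ evaluated on $\nabla f$. In dimension three $Ric(\cdot,\cdot) = \rho(J\cdot,\cdot)$ and $\rho$ is determined by the scalar curvature $S$, so this computation should pin down $S$ and produce an identity relating $A(\nabla f,\nabla f)$ (and possibly $A(J\nabla f,\nabla f)$) to $|\nabla f|^2$ and $S$. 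Combined with the first equation of \eqref{eq14}, namely $Ric(\nabla f,\nabla f) + 4A(J\nabla f,\nabla f) = k_0|\nabla f|^2$ (equality in \eqref{condm} along $\nabla f$), and with the first line of \eqref{n12}, I expect to extract enough linear relations to conclude $A(\nabla f,\nabla f) = 0$ at every point where $|\nabla f|\neq 0$; since that set is dense by the remark following \eqref{eq14}, continuity gives \eqref{new1} everywhere.

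Once $A(\nabla f,\nabla f)=0$ is known, the ``mixed'' Hessian formulas \eqref{e:vhessian} should drop out by writing an arbitrary horizontal $Y$ in the basis $\{\nabla f, J\nabla f\}$ (valid where $|\nabla f|\neq 0$) and invoking the symmetry properties of $A$. For the component along $J\nabla f$: from \eqref{n12} we have $\nabla^2f(\xi,J\nabla f) = -|\nabla f|^2 + A(J\nabla f,\nabla f)$, and since $df(J(J\nabla f)) = -df(\nabla f) = -|\nabla f|^2$ we get exactly $\nabla^2f(\xi,J\nabla f) = df(J(J\nabla f)) + A(J\nabla f,\nabla f)$, matching the claimed formula on this basis vector. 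For the component along $\nabla f$: we need $\nabla^2 f(\xi,\nabla f) = df(J\nabla f) + A(\nabla f,\nabla f)$; but $df(J\nabla f) = g(\nabla f, J\nabla f) = 0$ and $A(\nabla f,\nabla f)=0$ by \eqref{new1}, so the right side is $0$, and the left side is $-\tfrac13 A(\nabla f,\nabla f) = 0$ by \eqref{n12} — consistent. Thus $\nabla^2f(\xi,Y) = df(JY) + A(Y,\nabla f)$ holds on both basis vectors, hence for all horizontal $Y$ by bilinearity, and then everywhere by density and continuity. The formula for $\nabla^2 f(Y,\xi)$ then follows immediately from the second Ricci identity in \eqref{e:ricci identities}, $\nabla^2f(Y,\xi) - \nabla^2 f(\xi,Y) = -A(Y,\nabla f)$... — I should double-check the sign: that identity reads $\nabla^2f(X,\xi) - \nabla^2 f(\xi,X) = A(X,\nabla f)$, which gives $\nabla^2f(Y,\xi) = df(JY) + 2A(Y,\nabla f)$ as stated.

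The main obstacle I anticipate is the first part: correctly assembling $Ric(\nabla f, \cdot)$ from the differentiated Hessian and the curvature Ricci identities, keeping track of the torsion terms $(\nabla_{\nabla f}A)$ and the $\nabla^2 f(AX,Y)$-type contractions that appear in the last two identities of \eqref{e:ricci identities}. These torsion-derivative terms are exactly what could obstruct a clean algebraic conclusion, so the delicate point is to show they either integrate away (using compactness, the divergence theorem, and the second identity in \eqref{eq14}, $\int_M P_f(\nabla f)\,\vol = 0$) or cancel pointwise. It is plausible that $A(\nabla f,\nabla f)=0$ is first obtained only in an integrated form, e.g. $\int_M (\text{something})\, A(\nabla f,\nabla f)\,\vol = 0$ with a positive weight, and one then argues pointwise vanishing; alternatively the vertical Bochner formula of Proposition~\ref{remn=1} may be the tool that converts the integrated relation into the pointwise one. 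I would also keep \eqref{xi1} and \eqref{n13} at hand as consistency checks throughout, since $\nabla^2f(J\nabla f,\xi) = -|\nabla f|^2 + 2A(J\nabla f,\nabla f)$ must be reproduced by the candidate formula \eqref{e:vhessian} evaluated at $Y = J\nabla f$.
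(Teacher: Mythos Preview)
Your treatment of \eqref{e:vhessian}, once \eqref{new1} is in hand, is correct and is exactly what the paper does: check the formula on the basis $\{\nabla f, J\nabla f\}$ using \eqref{n12}--\eqref{n13}, then add the second Ricci identity to pass from $\nabla^2f(\xi,Y)$ to $\nabla^2f(Y,\xi)$.

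The gap is in your primary strategy for \eqref{new1}. Differentiating \eqref{e:hessian} and tracing through the Ricci identities to recover $Ric(\nabla f,\cdot)$ will simply reproduce the relations already recorded in \eqref{eq14} and the first line of \eqref{n12}; these constrain $Ric(\nabla f,\nabla f)$ and $A(J\nabla f,\nabla f)$ but give no pointwise handle on the \emph{other} torsion component $A(\nabla f,\nabla f)$. In dimension three $Ric(\nabla f,J\nabla f)=\rho(J\nabla f,J\nabla f)=0$ identically, so evaluating $Ric(\nabla f,\cdot)$ on $J\nabla f$ yields nothing new. There are no further ``linear relations'' to extract: the pointwise curvature comparison is exhausted by \eqref{n12}--\eqref{n13}, and those still allow an arbitrary $A(\nabla f,\nabla f)$.

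The paper's argument is instead the integral route you list as backup, but carried out with one more ingredient than you name. One integrates the vertical Bochner formula \eqref{e:vertical Bochner}; since $g(A,\nabla^2f)=0$ (as $A$ is trace-free and $J$-antisymmetric against $g$ and $\omega$) and $\xi(\triangle f)=2\,\xi f$, the divergence term is integrated by parts onto $\nabla^2f(e_a,\xi)A(e_a,\nabla f)$, and then \eqref{n12}--\eqref{n13} (expanding $|\nabla(\xi f)|^2$ and this torsion term in the basis $\{\nabla f,J\nabla f\}$) give
\[
\int_M\Big[-\tfrac{8}{9}\,\frac{A(\nabla f,\nabla f)^2}{|\nabla f|^2}+|\nabla f|^2-2(\xi f)^2-2A(J\nabla f,\nabla f)\Big]\vol=0.
\]
The crucial missing step is a \emph{second} integral identity---Lemmas~8.6--8.7 of \cite{IVO}, recorded here as \eqref{e:Afrom2lemmas}---which, together with \eqref{xi1}, \eqref{eq1} and $\int_M P_f(\nabla f)\vol=0$, shows that the three ``extra'' terms above integrate to zero. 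What remains is $\int_M A(\nabla f,\nabla f)^2/|\nabla f|^2\,\vol=0$, hence \eqref{new1}. So your instinct that the vertical Bochner formula and $\int P_f(\nabla f)=0$ are the right tools is correct, but you need the auxiliary identity \eqref{e:Afrom2lemmas} to close the loop; the pointwise curvature comparison alone will not do it.
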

\begin{proof}

Using the "vertical" Bochner formula \eqref{e:vertical Bochner}
and taking into account that $g(A,\nabla^2 f)=0$, after an
integration by parts  we obtain
\begin{multline}\label{e:applvB}
0=\int_M |\nabla(\xi f)|^2-df(\xi)\cdot \xi
(\triangle f) + 2df(\xi)\cdot g(A,\nabla^2 f) -2df
(\xi)(\nabla^*A)(\nabla f) \vol\\
=\int_M |\nabla(\xi f)|^2 -2(\xi\, f)^2-2\nabla^2 f(e_a,\xi)A(e_a,\gr)\vol\\
=\int_M  -2(\xi\, f)^2+\frac {1}{|\gr|^2} \Big[ \big (\nabla^2f(\gr,
\xi)\big)^2 +\big(\nabla^2 f(J\gr,
\xi)\big)^2\Big ]\vol\\
\int_M\frac {-2}{|\gr|^2} \Big[\nabla^2f(\gr,
\xi)A(\gr,\gr) + \nabla^2 f(J\gr,
\xi)A(J\gr,\gr)\Big ]\vol.
\end{multline}
using that
\[
\frac {1}{|\gr|^2} |A(\gr,\gr)| \leq  ||A||\overset{def}{=}\sup_M {|A|} \quad \text{a.e.}
\]
since $|\gr|\not=0$ almost everywhere.  Using \eqref{n12},  and
\eqref{n13} \eqref{e:applvB} takes the form
\begin{equation}\label{e:applvB2}
\int_M\Big[\frac {-8}{9|\gr|^2} \Big (A(\gr,\gr)\Big )^2 + |\gr|^2 -2|\xi f|^2-2A(J\gr,\gr)\Big ]\vol=0.
\end{equation}
Now, we recall \cite[Lemma~8.6]{IVO} and
\cite[Lemma~8.7]{IVO}  implying an identity which in the case $n=1$ reduces to
\begin{equation}\label{e:Afrom2lemmas}
2\int_M  A(J\nabla f,\nabla f)\vol =\int_M\Big[ -\frac {1}{2}g(\nabla^2 f,\omega)^2+\frac {1}{2}(\triangle f)^2+\frac {1}{2}P(\gr)\Big]\vol.
\end{equation}
Taking into account \eqref{xi1}, \eqref{eq1} and the fact that in
the  extremal case we have $\int_MP_f(\gr)=0$ by \eqref{eq14},
 \eqref{e:applvB2} and \eqref{e:Afrom2lemmas} imply
\begin{equation}
\frac{16}{9}\int_M \Big ( \frac {A(\gr,\gr)}{|\gr|}\Big)^2\vol=0,
\end{equation}
hence the claimed result for the torsion $A$. The formulas  for
the mixed derivatives follow taking also into account \eqref{n12},
 and \eqref{n13}.
\end{proof}
Lemma \ref{neww1} implies a number of crucial identities which we
record in the following Corollaries.
\begin{cor}\label{neww2}
The following identities holds true almost everywhere
\begin{align}\label{e:AJdf}
|\gr|^2A(JY,\gr)=df(Y)A(\gr, J\gr),\\\label{e:formulanormA1}
|\gr|^4 |A|^2=2\Big( A(\gr, J\gr)\Big )^2.
\end{align}
In addition, we have
\begin{equation}\label{e:formulanormA2}
|\gr|^2 |A|=-\sqrt{2} A(\gr, J\gr).
\end{equation}
\end{cor}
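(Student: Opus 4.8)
The plan is to reduce everything to a pointwise linear--algebra computation in the two--dimensional horizontal space $H$, using as the only inputs Lemma~\ref{neww1} (in particular $A(\gr,\gr)=0$) and the algebraic identities \eqref{tortrace} for $A$, namely its symmetry and $A(JX,JY)=-A(X,Y)$. Since $|\gr|\neq 0$ almost everywhere and $|J\gr|=|\gr|$, $g(\gr,J\gr)=df(J\gr)=0$, the vectors $u:=\gr/|\gr|$, $v:=J\gr/|\gr|$ form an orthonormal frame of $H$ on a set of full measure, and I would carry out all computations in this frame; on the complementary null set every identity in the statement has both sides equal to zero.

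First I would prove \eqref{e:AJdf}. For $Y\in H$ one has $g(Y,\gr)=df(Y)$ and $g(Y,J\gr)=-g(JY,\gr)=-df(JY)$, so
\[
Y=\frac{df(Y)}{|\gr|^{2}}\,\gr-\frac{df(JY)}{|\gr|^{2}}\,J\gr,\qquad\text{whence}\qquad JY=\frac{df(JY)}{|\gr|^{2}}\,\gr+\frac{df(Y)}{|\gr|^{2}}\,J\gr .
\]
Using the bilinearity of $A$ and $A(\gr,\gr)=0$ to discard the first term, $A(JY,\gr)=\frac{df(Y)}{|\gr|^{2}}\,A(J\gr,\gr)$; multiplying by $|\gr|^{2}$ and using $A(J\gr,\gr)=A(\gr,J\gr)$ gives \eqref{e:AJdf}.

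Next I would compute $|A|^{2}=A(u,u)^{2}+2A(u,v)^{2}+A(v,v)^{2}$. Here $A(u,u)=|\gr|^{-2}A(\gr,\gr)=0$ by \eqref{new1}, while $A(v,v)=|\gr|^{-2}A(J\gr,J\gr)=-|\gr|^{-2}A(\gr,\gr)=0$ by the type condition $A(JX,JY)=-A(X,Y)$, so only the cross term survives and $|A|^{2}=2|\gr|^{-4}\big(A(\gr,J\gr)\big)^{2}$, which is \eqref{e:formulanormA1}. Taking positive square roots gives $|\gr|^{2}|A|=\sqrt{2}\,|A(\gr,J\gr)|$, so \eqref{e:formulanormA2} reduces to checking $A(\gr,J\gr)\le 0$. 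For this I would use the first line of \eqref{n12}, which rearranges to $A(J\gr,\gr)=\big(1-\frac{1}{8}S\big)|\gr|^{2}$, together with the fact that tracing the Lichnerowicz--type hypothesis \eqref{condm} (normalized so that $k_{0}=4$) over an orthonormal basis of $H$ and using $A(e_{a},Je_{a})=0$ forces $S\ge 8$. Hence $A(\gr,J\gr)=A(J\gr,\gr)\le 0$ and $|\gr|^{2}|A|=-\sqrt{2}\,A(\gr,J\gr)$, which is \eqref{e:formulanormA2}.

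Given Lemma~\ref{neww1}, each of these steps is routine; the only one that requires a moment's attention is the sign in \eqref{e:formulanormA2}, where one must combine the expression for the mixed Hessian component $\nabla^{2}f(\xi,J\gr)$ recorded in \eqref{n12} with the inequality $S\ge 8$ obtained by tracing \eqref{condm}.
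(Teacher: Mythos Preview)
Your proof is correct and follows essentially the same route as the paper: expand in the orthonormal frame $\{\gr/|\gr|,\,J\gr/|\gr|\}$, use $A(\gr,\gr)=0$ together with $A(JX,JY)=-A(X,Y)$ for the first two identities, and then determine the sign of $A(\gr,J\gr)$ from the Lichnerowicz hypothesis. The only cosmetic difference is that you obtain $A(\gr,J\gr)\le 0$ by tracing \eqref{condm} to get $S\ge 8$ and combining with the relation $A(J\gr,\gr)=(1-\tfrac{S}{8})|\gr|^{2}$ from \eqref{n12}, whereas the paper simply asserts that the Lichnerowicz condition gives the sign; your version is a perfectly valid (and more explicit) justification of that step.
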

\begin{proof}
Since $\gr,J\gr$ form a basis of $H$ almost everywhere, then
\eqref{e:AJdf} follows from \eqref{new1} by a direct verification.
Then, \eqref{e:formulanormA1} follows since the horizontal space
is two dimensional.
 Notice
that Lichnerowicz' condition implies that
\begin{equation}
A(\gr, J\gr)\leq 0,
\end{equation}
which, together with \eqref{e:formulanormA1} imply
\eqref{e:formulanormA2}.
\end{proof}
The proof of \cite[Lemma 4.2]{IVO} shows that \eqref{e:vhessian} gives the following fact.
\begin{lemma}\label{l:D3f} Let $M$ be a strictly pseudoconvex pseudohermitian
CR manifold of dimension three. If $f$ is an eigenfunction
of the sub-Laplacian satisfying \eqref{e:hessian}, then the
following formula for the third covariant derivative holds true
\begin{equation}  \label{e:D3f extremal}
\nabla ^{3}f(X,Y,\xi ) =-df(\xi )g(X,Y)+f\omega
(X,Y)-2fA(X,Y)-2df(\xi )A(JX,Y) +2(\nabla_X A)(Y,\nabla f).
\end{equation}
\end{lemma}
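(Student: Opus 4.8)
The plan is to derive \eqref{e:D3f extremal} by covariantly differentiating the mixed Hessian identity \eqref{e:vhessian} along a horizontal direction and then using the horizontal Hessian \eqref{e:hessian} to eliminate the resulting second-order terms; this is precisely the scheme of \cite[Lemma~4.2]{IVO}. Fix horizontal vector fields $X,Y$. Writing $\nabla^3 f(X,Y,\xi)=(\nabla_X\nabla^2 f)(Y,\xi)$ and using $\nabla\xi=0$, the term containing $\nabla_X\xi$ drops out, so $\nabla^3 f(X,Y,\xi)=X\big(\nabla^2 f(Y,\xi)\big)-\nabla^2 f(\nabla_X Y,\xi)$, where $\nabla_X Y\in H$ because the Tanaka--Webster connection preserves $H$. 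Substituting $\nabla^2 f(Y,\xi)=df(JY)+2A(Y,\nabla f)$ from \eqref{e:vhessian}, using $\nabla J=0$ to evaluate $X\big(df(JY)\big)-df(J\nabla_X Y)=(\nabla_X df)(JY)=\nabla^2 f(X,JY)$, and using the Leibniz rule $X\big(A(Y,\nabla f)\big)-A(\nabla_X Y,\nabla f)=(\nabla_X A)(Y,\nabla f)+A(Y,\nabla_X\nabla f)$, I obtain
\[
\nabla^3 f(X,Y,\xi)=\nabla^2 f(X,JY)+2(\nabla_X A)(Y,\nabla f)+2A(Y,\nabla_X\nabla f).
\]

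It then remains to rewrite the two leftover second-order terms by means of \eqref{e:hessian}. For the first, $X$ and $JY$ are horizontal, so \eqref{e:hessian} together with $\omega(\cdot,\cdot)=g(J\cdot,\cdot)$, $J^2=-\mathrm{id}$, and the fact that $J$ is a $g$-isometry give $\nabla^2 f(X,JY)=f\omega(X,Y)-df(\xi)g(X,Y)$. For the second, expand $\nabla_X\nabla f=\nabla^2 f(X,e_a)e_a$, apply \eqref{e:hessian} to $\nabla^2 f(X,e_a)$, and contract against $A(Y,e_a)$; using that $A$ is symmetric and trace-free and that $g(JX,e_a)e_a=JX$, the contraction collapses to $A(Y,\nabla_X\nabla f)=-fA(X,Y)-df(\xi)A(JX,Y)$. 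Substituting these two expressions back yields exactly the right-hand side of \eqref{e:D3f extremal}.

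The argument uses only \eqref{e:vhessian} and \eqref{e:hessian} (both available under the hypotheses via Lemma~\ref{neww1}), together with the structural identities $\nabla\xi=\nabla J=\nabla g=0$ and the algebra of $A$, $J$, $g$, $\omega$ on the two-dimensional horizontal space; in particular no integral identity and no appeal to the vanishing of $A(\nabla f,\nabla f)$ is needed. I expect the only delicate point to be the evaluation of $A(Y,\nabla_X\nabla f)$: one must correctly express $\nabla_X\nabla f$ through the horizontal Hessian and carry out the $e_a$-contraction with $A$ so that the torsion terms $-2fA(X,Y)$ and $-2df(\xi)A(JX,Y)$ come out with the stated coefficients and signs. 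Thus \eqref{e:D3f extremal} is simply the covariant derivative of the full Hessian of $f$ in the Reeb direction, re-expressed through $f$, $df(\xi)$, $A$, and $\nabla A$.
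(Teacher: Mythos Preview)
Your proof is correct and is exactly the approach the paper intends: the paper simply cites the computation of \cite[Lemma~4.2]{IVO}, which is precisely the differentiation of the mixed Hessian identity \eqref{e:vhessian} followed by substitution of \eqref{e:hessian}, and you have reproduced that computation accurately (including the signs in $\nabla^2 f(X,JY)=f\omega(X,Y)-df(\xi)g(X,Y)$ and in $A(Y,\nabla_X\nabla f)=-fA(X,Y)-df(\xi)A(JX,Y)$). One small caveat on your closing remark: while the passage from \eqref{e:vhessian} to \eqref{e:D3f extremal} is indeed purely algebraic, the input \eqref{e:vhessian} itself was obtained in Lemma~\ref{neww1} via the vanishing $A(\nabla f,\nabla f)=0$, so that fact is used upstream even if not in the differentiation step.
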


We turn to the proof of our main result.
\begin{thrm}\label{t:A vansihes if div 3D}
Let $M$ be a compact strictly pseudoconvex
pseudohermitian CR manifold of dimension three for which the
Lichnerowicz condition \eqref{condm} holds and the Paneitz operator is non-negative. If  $%
f $ is an eigenfunction satisfying \eqref{e:hessian} then the
pseudohermitian torsion vanishes, $A=0$.
\end{thrm}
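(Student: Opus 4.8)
\textbf{Proof strategy for Theorem~\ref{t:A vansihes if div 3D}.}
The plan is to bootstrap from Lemma~\ref{neww1}, which already gives $A(\nabla f,\nabla f)=0$ together with the explicit ``mixed'' components of the Hessian in \eqref{e:vhessian}, and to upgrade this to the full vanishing $A\equiv 0$. The natural quantity to control is the scalar $A(\nabla f,J\nabla f)$, because by Corollary~\ref{neww2}, and in particular by \eqref{e:formulanormA1} and \eqref{e:formulanormA2}, the pointwise norm $|A|$ is (a.e.) a constant multiple of $|\nabla f|^{-2}A(\nabla f,J\nabla f)$; hence showing $A(\nabla f,J\nabla f)=0$ a.e.\ forces $A=0$ on the set $\{|\nabla f|\neq 0\}$, which is all of $M$ up to a null set, and then $A\equiv 0$ by continuity.

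First I would derive an integral identity for $A(\nabla f,J\nabla f)$ by differentiating. The tool is Lemma~\ref{l:D3f}: contracting the third-derivative formula \eqref{e:D3f extremal} appropriately — e.g.\ taking $X=e_a$, $Y=e_a$, or pairing with $\nabla f$ and $J\nabla f$ — and integrating over $M$, the divergence theorem kills the total-divergence terms and one is left with relations among $\int_M f A(\nabla f,J\nabla f)\,\vol$, $\int_M df(\xi)\,A(\cdot)\,\vol$, and $\int_M (\nabla_{\nabla f}A)(\cdot,\nabla f)\,\vol$. Here is where the integration-by-parts-with-powers idea of \cite{LW} enters: I would multiply \eqref{e:D3f extremal}, or the Bochner-type consequences of it, by a suitable power of $f$ (or of $|\nabla f|$, using that $|\nabla f|\neq 0$ a.e.) before integrating, so that the unwanted torsion-derivative terms either cancel against each other or are absorbed into a manifestly signed expression. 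The Lichnerowicz hypothesis \eqref{condm} re-enters through \eqref{eq14} and the curvature formulas of \cite{IVO}, which relate $Ric$, $S$, and $A$ along $\nabla f$; combined with the sign $A(\nabla f,J\nabla f)\le 0$ from Corollary~\ref{neww2}, the target is to land on an inequality of the form $c\int_M \big(A(\nabla f,J\nabla f)/|\nabla f|\big)^2\,\vol\le 0$ with $c>0$, exactly as the final step of Lemma~\ref{neww1} produced for $A(\nabla f,\nabla f)$.

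Concretely, the key steps in order: (i) use \eqref{e:vhessian} and \eqref{n12}--\eqref{n13} to rewrite all vertical Hessian components purely in terms of $df(JY)$, $A(Y,\nabla f)$, and $df(\xi)$; (ii) plug these into Lemma~\ref{l:D3f} and also into the Ricci identities \eqref{e:ricci identities} of order three to get a closed expression for $\nabla^3 f(\xi,\cdot,\cdot)$ and for $\xi(|\nabla f|^2)$, $\xi(A(\nabla f,J\nabla f))$; (iii) compute $\triangle\big(A(\nabla f,J\nabla f)\big)$ or $\triangle\big(A(\nabla f,J\nabla f)^2/|\nabla f|^2\big)$ via a horizontal Bochner computation, integrate, and discard divergences; (iv) invoke the non-negativity of the Paneitz operator (through $\int_M P_f(\nabla f)\,\vol=0$ in \eqref{eq14}) and the curvature identities of \cite{IVO} to eliminate the terms that are not of definite sign; (v) conclude $A(\nabla f,J\nabla f)=0$ a.e., hence $A\equiv 0$. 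I expect the main obstacle to be step (iii)--(iv): controlling the covariant-derivative-of-torsion terms $(\nabla_X A)(Y,\nabla f)$ that appear in \eqref{e:D3f extremal}. These do not vanish a priori, and the whole point is that in dimension three, after using \eqref{e:vhessian} and the dimension-three curvature relation $Ric(\cdot,\cdot)=\rho(J\cdot,\cdot)$, they must combine into a sign-definite quantity (plus a divergence). Getting that cancellation to work out — rather than being left with an indefinite remainder — is the crux, and it is precisely where the power-weight trick from \cite{LW} and the comparison of the \cite{IVO}-computed Ricci tensor with \eqref{condm} do the essential work.
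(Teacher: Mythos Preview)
Your proposal has the right ingredients (Lemma~\ref{neww1}, Corollary~\ref{neww2}, Lemma~\ref{l:D3f}, and the power-weighted integration from \cite{LW}), but it misses the two structural moves that actually make the argument close, and the endpoint you aim for is not what the paper reaches.

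First, the paper does \emph{not} try to produce a single sign-definite inequality of the form $c\int_M \big(A(\nabla f,J\nabla f)/|\nabla f|\big)^2\,\vol\le 0$. That worked for $A(\nabla f,\nabla f)$ in Lemma~\ref{neww1} because the vertical Bochner formula happens to isolate that term with a good coefficient; there is no analogous miracle for $A(\nabla f,J\nabla f)$. Instead, the paper first proves the pointwise identity $g(\nabla f,\nabla|A|^2)=0$ (hence $g(\nabla f,\nabla|A|^k)=0$ for all $k>0$). This comes from computing $g(A,\nabla^3 f(\cdot,\cdot,\xi))$ in two ways --- once from \eqref{e:D3f extremal}, once from the last Ricci identity after substituting the Hessian formula --- and subtracting; the difference is exactly $g(A,\nabla_{\nabla f}A)=\tfrac{1}{2}g(\nabla f,\nabla|A|^2)$. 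Your steps (iii)--(iv), which propose a Bochner computation for $\triangle\big(A(\nabla f,J\nabla f)\big)$, do not lead here and would leave uncontrolled $\nabla A$ terms.

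Second, the role of the \cite{LW} power trick is not to make torsion-derivative terms ``cancel or become sign-definite'' in a single identity. Rather, one multiplies by $f^{2k}$ (or $f^{2k+1}$) and integrates, using the just-proved fact $g(\nabla f,\nabla|A|^k)=0$ to kill the terms where the derivative lands on $|A|$. This yields, for each $k$, an inequality whose two sides scale like $(2k+1)$ and $\sqrt{2k+1}$ respectively (after H\"older and a lower bound $|A|\ge a>0$ on the set where $A\neq 0$). Letting $k\to\infty$ forces a contradiction unless $A\equiv 0$. Your proposal does not anticipate this asymptotic-in-$k$ contradiction mechanism, and without the preliminary identity $g(\nabla f,\nabla|A|^2)=0$ the power-weighted integration by parts would not simplify enough to reach it.
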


\begin{proof}
First we show
$$g(\gr,\nabla|A|^2) =0.$$
Indeed, Lemma \ref{l:D3f} gives
\begin{equation}\label{e:dfdA1}
g(A,\nabla^3f(.,.,\xi))=-2f|A|^2+2g(A,\nabla A(.,.,\gr)).
\end{equation}
Next we compute the above scalar product using the Ricci
identities. In fact, in the last Ricci identity  we make the
substitution
\[
\nabla^3 f(\xi, X,Y))=-df(\xi)g(X,Y)-(\xi^2 f)\omega(X,Y),
\]
which follows from the Hessian equation \eqref{eq7}, to obtain the
equation
\begin{multline}\label{e:ricci2}
\nabla^3 f(X,Y,\xi)=-df(\xi)g(X,Y)-(\xi^2 f)\omega(X,Y)-2f\cdot A(X,Y)\\
+ \nabla A(X,Y,\gr)+2df(\xi)A(JX,Y)-\nabla A(\gr,X,Y).
\end{multline}
Now, \eqref{e:ricci2} implies
\begin{equation}\label{e:dfdA2}
g(A,\nabla^3f(.,.,\xi))=-2f|A|^2+2g(A,\nabla
A(.,.,\gr))-g(A,\nabla A(\gr,.,.)).
\end{equation}
Equations \eqref{e:dfdA1} and \eqref{e:dfdA2} show that
\begin{equation}\label{e:dfdA3}
g(A,\nabla A(\gr,.,.))=0,\qquad
g(A,\nabla^3f(.,.,\xi))=-2f|A|^2+2g(A,\nabla A(.,.,\gr))
\end{equation}
hence
\begin{equation}\label{e:dfdA4}
g(\gr,\nabla|A|^2) = 2g(A,\nabla A(.,.,\gr))=0.
\end{equation}
Equation \eqref{e:dfdA4} implies that for any $k>0$ we have
\begin{equation}\label{e:dfdAkey}
g(\gr,\nabla|A|^k) =0.
\end{equation}
For simplicity, suppose $A\not=0$ everywhere.  For the complete
argument, which requires the introduction of a cut-off function we
refer to \cite[Lemma 4]{LW}).  Since $M$ is compact there is a
constant $a>0$ such that at every point of $M$ we have
\begin{equation}\label{e:Abelow}
a<|A|, \quad \text{hence}\quad |A|^2\leq\frac {1}{a}|A|^3.
\end{equation}
The divergence formula gives
\begin{multline}\label{e:A3calc0}
\int_M |A|^3f^{2(k+1)}\vol=-\frac {1}{2}\int_M |A|^3f^{2k+1}\triangle f \vol=
\frac {1}{2}\int_M g(\nabla (|A|^3f^{2k+1}),\gr) \vol\\
=\frac {2k+1}{2}\int_M |A|^3f^{2k}|\gr|^2 \vol + \frac {1}{2}\int_M f^{2k+1} g(\nabla |A|^3,\gr) \vol\\
=\frac {2k+1}{2}\int_M |A|^3f^{2k}|\gr|^2 \vol,
\end{multline}
taking into account \eqref{e:dfdAkey}.

With the help of  \eqref{e:formulanormA2}, the divergence
formula  and \eqref{e:dfdAkey} we can compute the last integral as
follows
\begin{multline}\label{e:A3calc1}
\sqrt{2}\int_M |A|^3f^{2k}|\gr|^2 \vol = -\int_M |A|^2f^{2k}A(\gr,J\gr) \vol=
-\int_M |A|^2f^{2k}A(e_a,J\gr)df(e_a) \vol\\
=\int_M g(\nabla |A|^2,AJ\gr)f^{2k+1}\vol +2k\int_M |A|^2
f^{2k}A(\gr,J\gr)\vol\\ + \int_M |A|^2f^{2k+1}\nabla
A(e_a,e_a,J\gr)\vol +\int_M
|A|^2f^{2k+1}A(e_a,J\nabla_{e_a}(\gr))\vol.
\end{multline}
The last integral equals zero due to \eqref{e:hessian}. The first
integral is zero due to \eqref{e:AJdf} and \eqref{e:dfdAkey}.
Therefore, the first and last equality in \eqref{e:A3calc1} give
\begin{equation}\label{e:A3calc2}
\sqrt{2}(2k+1)\int_M |A|^3f^{2k}|\gr|^2 \vol =\int_M
|A|^2f^{2k+1}\nabla A(e_a,e_a,J\gr)\vol .
\end{equation}
Therefore, using \eqref{e:Abelow} and H\"older's inequality we
have
\begin{multline}
\sqrt{2}(2k+1)\int_M |A|^3f^{2k}|\gr|^2 \vol \leq ||\nabla^* A||\int_M |A|^2f^{2k+1}|\gr|\vol \leq \frac {||\nabla^* A||}{a}\int_M |A|^3f^{2k+1}|\gr|\vol\\
\leq \frac {||\nabla^* A||}{a}\Big (\int_M |A|^3f^{2(k+1)}\vol\Big)^{1/2}\, \Big( \int_M |A|^3f^{2k}|\gr|^2\vol \Big)^{1/2}\\
=\frac {||\nabla^* A||}{a}\Big (\int_M
|A|^3f^{2(k+1)}\vol\Big)^{1/2}\, \Big( \frac{2}{2k+1}\int_M
|A|^3f^{2(k+1)}\vol \Big)^{1/2},
\end{multline}
using \eqref{e:A3calc0} in the last equality. Now, equation
\eqref{e:A3calc0} gives
\[
\sqrt{2}(2k+1)\int_M |A|^3f^{2k}|\gr|^2 \vol \leq \frac
{||\nabla^* A||}{a}\Big( \frac{2k+1}{2} \Big)^{1/2}\Big (\int_M
|A|^3f^{2k}|\gr|^2 \vol \Big),
\]
which gives a contradiction by taking $k$ sufficiently large.
Therefore the torsion vanishes, $|A|=0$.

\end{proof}

\subsection{Proof of  Theorem \ref{main2}}  We  apply Theorem \ref{t:A vansihes if div 3D} to conclude
that the pseudohermitian torsion vanishes. The claim of the
Theorem~\ref{main2} follows by applying the known result in the
torsion-free (Sasakianan) case.

\end{document}